\renewcommand{\phi}{\varphi}
\newcommand{\co}{\mathbb{C}}
\newcommand{\N}{\mathbb{N}}
\newcommand{\Z}{\mathbb{Z}}
\newcommand{\R}{\mathbb{R}}
\newcommand{\ff}{\mathcal{F}}
\newcommand{\mm}{\mathcal{M}}
\newcommand{\nn}{\mathcal{N}}
\newcommand{\cv}{c}
\newtheorem{Thm}{Theorem}[section]
\newtheorem{theorem}[Thm]{Theorem}
\newtheorem{lemma}[Thm]{Lemma}
\newtheorem{corollary}[Thm]{Corollary}
\begin{document}
\sloppy

\title[]{Complete interpolating sequences for the Gaussian shift-invariant space}
\author{Anton Baranov}
\address{Department of Mathematics and Mechanics\\ St. Petersburg State University\\
St. Petersburg, Russia}
\email{anton.d.baranov@gmail.com} 
\author{Yurii Belov}
\address{Department of Mathematics and Computer Science \\ St. Petersburg State University\\
St. Petersburg, Russia}
\email{j\_b\_juri\_belov@mail.ru}
\author{Karlheinz Gr\"ochenig}
\address{Faculty of Mathematics \\
University of Vienna \\
Oskar-Morgenstern-Platz 1 \\
A-1090 Vienna, Austria}
\email{karlheinz.groechenig@univie.ac.at}

%\date{}

\keywords{Sampling, interpolation, Riesz bases, small Fock spaces,
shift-invariant space, Avdonin-type condition}
\subjclass[2000]{Primary 30H20; Secondary 30D10, 30E05, 42C15,  94A20}
\thanks{A.\,B. and Yu.\,B. were supported by the Ministry of Science and Higher Education of the Russian Federation, 
agreement No 075-15-2021-602, and by the Russian Foundation for Basic Research
grant 20-51-14001-ANF-a.
K.\,G.\ was supported in part by the  project P31887-N32  of the
Austrian Science Fund (FWF)}
\begin{abstract} {We give a full  description of complete interpolating sequences 
for the shift-invariant space generated by the Gaussian. As a
consequence, we  rederive
the known density conditions for sampling and interpolation.}
\end{abstract}

\maketitle

%\maketitle

\section{Main results}
\label{section1}

Consider the shift-invariant space of functions on $\R$ with Gaussian
generator $g(z) = e^{-az^2}$ for $a>0$ defined as
$$
V^2 =\bigg\{f(z) = \sum_{n\in \Z} c_ne^{-a (z-n)^2} : \ (c_n) \in\ell^2(\Z)    \bigg\}.
$$
We  consider the space $V^2$ as a subspace of $L^2(\R)$ with the
usual $L^2$-norm.

The space $V^2$ belongs to the general family of shift-invariant
spaces. Given a generator $g\in L^2(\R )$, such a space is defined as 
$$
V^2(g) =\{f(z) = \sum_{n\in \Z} c_ng(z-n) : \ (c_n) \in\ell^2(\Z)
\} \subseteq L^2(\R ). 
$$
The primary example is the classical
Paley--Wiener space $PW = \{f\in L^2(\R ): \mathrm{supp}\, \hat{f} \subseteq
[-1/2,1/2]\}$, which, by the sampling theorem of
Shannon--Whittaker--Kotelnikov, can be identified with the
shift-invariant space  $V^2(\frac{\sin \pi x}{\pi x})$.  In
signal processing~\cite{AG01} shift-invariant spaces are often taken as a
substitute for the Paley--Wiener space. A unifying feature of both
$V^2$ (Gaussian generator) and $PW$ (sinc-generator) is the fact that
both spaces can be viewed as spaces of entire functions by
interpreting the variable $z$ to be in $\co $. 

It is  easy to see  that the norm equivalence  $\|f\|_{L^2(\R)} \asymp
\|(c_n)\|_{\ell^2(\Z)}$  holds on $V^2$  with some absolute constants.  
In what follows it will be convenient for us to work with the second quantity and so we put 
$\|f\|_{V^2} :=  \|(c_n)\|_{\ell^2(\Z)}$, and we often identify $V^2$ with $\ell^2(\Z)$ 
via the mapping $(c_n)\mapsto f(z) =  \sum_{n\in \Z} c_ne^{-(z-n)^2}$.

A sequence $\Lambda = \{\lambda_n\}_{n\in\Z} \subset \R$ is said to be {\it sampling for $V^2$},
if 
$$
\sum_{n\in \Z} |f(\lambda_n)|^2 \asymp \|f\|^2_{V^2}, \qquad f\in V^2, 
$$ 
and we say that $\Lambda$ is {\it interpolating for $V^2$} if for
every $(a_n) \in \ell^2(\Z)$ 
there exists $f\in V^2$ such that $f(\lambda_n) = a_n$. If $\Lambda$ is both sampling and interpolating
 (or, equivalently, the solution of the interpolation problem is unique), we say that
$\Lambda$ is a {\it complete interpolating sequence}.  

Sampling and interpolation in the space $V^2$ and in more general shift-invariant spaces were studied
in \cite{grs1, grs2}. The main result can be formulated as follows. 
Recall that $\Lambda$ is said to be separated if $\inf_{\lambda,
  \lambda'\in \Lambda, \lambda\ne \lambda'} |\lambda-\lambda'|>0$, and
denote by $D^+({\Lambda})$ and $D^{-}({\Lambda})$ the  usual
upper and lower  Beurling densities 
$$
D^+(\Lambda) = \lim_{r\to\infty} \frac{\sup_{x\in\R} {\rm card}\,(\Lambda\cap [x, x+r])}{r},
\quad
D^-(\Lambda) = \lim_{r\to\infty} \frac{\inf_{x\in\R} {\rm card}\,(\Lambda\cap [x, x+r])}{r}.
$$

\begin{theorem} 
 \label{thm2}
 \begin{itemize}
 \item[(i)] Sufficiency:  Every separated sequence $\Lambda \subset \R$ 
 with $D^{-}({\Lambda})> 1$  is sampling for $V^2$.
Moreover, it contains a complete interpolating sequence.
 
 \item[(ii)] Necessity: If  $\Lambda$ is sampling for $V^2$, 
 then $\Lambda $  is a finite union of  separated subsequences and 
 $\Lambda$ contains a separated sequence 
 $\widetilde{\Lambda}$  such that  $D^{-}(\widetilde{\Lambda})\geq 1$.
 \end{itemize}
\end{theorem}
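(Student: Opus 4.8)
The plan is to deduce both parts of Theorem~\ref{thm2} from a description of the complete interpolating sequences for $V^2$, organized around the critical reference sequence $\Z$. First I would record the two structural facts about $V^2$ that drive everything. Since $|f(\lambda)| = |\sum_n c_n e^{-a(\lambda-n)^2}| \le \|(c_n)\|_{\ell^2}\,\big(\sum_n e^{-2a(\lambda-n)^2}\big)^{1/2}$ and the last factor is bounded above and below uniformly in $\lambda \in \R$, the point evaluations are uniformly bounded and the reproducing kernels $k_\lambda \in V^2$ satisfy $0 < c \le \|k_\lambda\|_{V^2}^2 \le C$ for all $\lambda$. Moreover the Gaussian off-diagonal decay of $\langle k_\lambda, k_{\lambda'}\rangle$ yields, for every separated $\Lambda$, the \emph{upper sampling (Bessel) bound} $\sum_n |f(\lambda_n)|^2 \le B\|f\|_{V^2}^2$ by a Schur test. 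Finally, sampling on $\Z$ amounts to a Toeplitz operator on $\ell^2(\Z)$ with symbol $\sum_k e^{-ak^2}e^{ik\theta}$, a strictly positive continuous function; hence $\Z$ is a \emph{complete interpolating sequence of density $1$}, which is the natural reference.

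Sufficiency. Let $\Lambda$ be separated with $D^-(\Lambda) > 1$. The upper bound is free from the Bessel estimate above, so only the lower bound is needed. I would extract from $\Lambda$ a complete interpolating subsequence $\{\mu_n\}$ by matching it to $\Z$: because $D^-(\Lambda) > 1$, for each $n$ one can choose $\mu_n \in \Lambda$ with $\sup_n |\mu_n - n| < \infty$ and, crucially, with the Avdonin-type averaging condition on the displacements $\mu_n - n$ that a characterization of complete interpolating sequences should require. Once $\{\mu_n\}$ is known to be complete interpolating, it is in particular sampling, so $\sum_n |f(\mu_n)|^2 \ge c\|f\|_{V^2}^2$, and a fortiori $\sum_{\lambda \in \Lambda}|f(\lambda)|^2 \ge c\|f\|_{V^2}^2$. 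Together with the Bessel bound this shows $\Lambda$ is sampling and contains the complete interpolating sequence $\{\mu_n\}$.

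Necessity. Suppose $\Lambda$ is sampling. Then the upper sampling bound holds, i.e.\ $\{k_{\lambda}/\|k_\lambda\|\}_{\lambda\in\Lambda}$ is a Bessel system of unit vectors. If some unit interval contained $N$ points, testing the Bessel inequality against a suitable sign combination $\sum_j \pm k_{\mu_j}$ of the corresponding kernels, which are nearly colinear for clustered points, forces $N$ to be bounded; hence $\Lambda$ is a finite union of separated sequences. From such a $\Lambda$ one extracts a \emph{separated} subsequence $\widetilde\Lambda$ that remains sampling. It then remains to prove the sharp density bound $D^-(\widetilde\Lambda) \ge 1$. Here I would argue by comparison with the reference sequence $\Z$: if $D^-(\widetilde\Lambda) < 1$, there are intervals $[x_k, x_k+r_k]$ with $r_k \to \infty$ carrying fewer than $(1-\epsilon)r_k$ points of $\widetilde\Lambda$, and exploiting that $V^2$ locally behaves like a Paley--Wiener space of bandwidth $1$ one constructs normalized $f_k \in V^2$ concentrated in these gaps with $\sum_{\lambda}|f_k(\lambda)|^2 \to 0$, contradicting the lower sampling bound.

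The main obstacle is twofold, and both difficulties live at the critical density. In the sufficiency part it is the selection of the complete interpolating subsequence $\{\mu_n\}$: one must not merely match densities but produce displacements $\mu_n - n$ satisfying the precise Avdonin-type condition, which is exactly what a characterization theorem is designed to supply. In the necessity part it is the sharp bound $D^-(\widetilde\Lambda)\ge 1$, requiring the concentration/test-function construction in long gaps; this is where the fine analytic structure of $V^2$ as a \emph{small Fock space} enters, and it is the technically hardest step.
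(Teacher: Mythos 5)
Your sufficiency argument follows essentially the same route as the paper's: select from $\Lambda$ a subsequence $\{\mu_n\}=\{n+\delta_n\}$ with bounded displacements satisfying the Avdonin-type conditions (a)--(b), invoke Theorem~\ref{main1} to conclude that this subsequence is complete interpolating (hence sampling), and combine with the Bessel bound for separated sequences. The paper does exactly this, and, like you, it defers the combinatorial selection step to the literature (the argument of \cite{seip}, with details as in \cite{bdhk}); so you are on equal footing there. Your necessity argument, however, is genuinely different. The paper stays inside its own machinery: it restricts a sampling set to $V^2_+$, transfers to the small Fock space $\ff_a$ via $w=e^{2c\lambda}$, and quotes the density theorems of \cite{bdhk}. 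You instead propose the classical Beurling--Landau route directly in $V^2$: the Bessel bound excludes large clusters (your kernel argument is sound, since $\|k_\lambda\|\asymp 1$ and the Gram entries of kernels at nearby points are bounded below), one extracts a separated subsequence that is still sampling, and a concentration argument gives $D^-\ge 1$. This is precisely the ``well-known'' route the paper attributes to \cite{AG01}; it avoids the Fock-space reduction for necessity, whereas the paper's route has the advantage that all the hard analysis is already contained in \cite{bdhk}. Your observation that $\Z$ itself is complete interpolating (invertibility of the Laurent operator with strictly positive theta-function symbol $\sum_k e^{-ak^2}e^{ik\theta}$) is correct and a nice elementary anchor, though neither route ultimately needs it.

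One caution about what you correctly identify as the hardest step: as literally stated, the concentration argument has a gap. If $I_k=[x_k,x_k+r_k]$ contains fewer than $(1-\epsilon)r_k$ points of $\widetilde\Lambda$, a normalized $f_k$ with coefficients supported on $I_k\cap\Z$ and vanishing at those points still takes values of constant order at points of $\widetilde\Lambda$ just outside $I_k$; Gaussian decay makes the outside contribution \emph{bounded}, not small, so $\sum_\lambda |f_k(\lambda)|^2\to 0$ does not follow. The standard repair (Landau's dimension count, or the homogeneous approximation property as in \cite{AG01}) is to force $f_k$ to vanish at all points of $\widetilde\Lambda$ in an enlarged interval $[x_k-L,x_k+r_k+L]$, with $L$ fixed large depending on the lower sampling bound: separation makes this cost only $O(L)$ additional linear conditions, which is absorbed by the deficiency $\epsilon r_k$ for large $r_k$, and then the Gaussian tails of $f_k$ at the remaining points contribute at most $Ce^{-aL^2}\|f_k\|^2$, which can be made smaller than the sampling constant. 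With that bookkeeping your plan goes through.
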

 
\begin{theorem} 
 \label{thm-int}
\begin{itemize}
 \item[(i)] Every separated sequence $\Lambda$ 
 with $D^{+}({\Lambda})< 1$
 is a set of interpolation for $V^2$. Moreover, it can be enlarged  to a complete interpolating sequence.

 \item[(ii)] If  $\Lambda$ is a set of interpolation for $V^2$, 
 then $\Lambda $ is  separated and  $D^{+}({\Lambda})\leq 1$.
 \end{itemize}
\end{theorem}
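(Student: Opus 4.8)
The plan is to translate both parts into statements about the reproducing kernels of $V^2$ and then to play off the frame/Riesz-sequence dichotomy against the already established sampling theorem. Under the identification $V^2\cong\ell^2(\Z)$ the value $f(\lambda)$ equals $\langle f,k_\lambda\rangle$ with $k_\lambda=(e^{-a(\lambda-n)^2})_{n\in\Z}$, and $\|k_\lambda\|^2=\sum_{n}e^{-2a(\lambda-n)^2}$ is a positive $1$-periodic function, so $\|k_\lambda\|\asymp1$ uniformly in $\lambda\in\R$. In these terms $\Lambda$ is interpolating iff the analysis map $R\colon f\mapsto(\langle f,k_\lambda\rangle)_{\lambda\in\Lambda}$ is onto $\ell^2(\Lambda)$, and the uniform two-sided bound on $\|k_\lambda\|$ makes all normalisations harmless.

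I would treat the necessity part (ii) first. A Baire category argument shows that interpolation forces a bounded solution operator $S\colon\ell^2(\Lambda)\to V^2$ with $RS=I$: the sets $E_N=\{a:\exists f,\ Rf=a,\ \|f\|\le N\}$ are closed (by weak compactness of balls together with the continuity of point evaluation) and cover $\ell^2(\Lambda)$, so one of them absorbs a ball about the origin. Putting $f_\lambda=Se_\lambda$ one obtains interpolating functions with $f_\lambda(\lambda')=\delta_{\lambda,\lambda'}$ and $\|f_\lambda\|\le C$; if two nodes were arbitrarily close then $1=|f_\lambda(\lambda)-f_\lambda(\lambda')|\le\|f_\lambda\|\,\|k_\lambda-k_{\lambda'}\|$ would contradict $\|k_\lambda-k_{\lambda'}\|\to0$, so $\Lambda$ is separated. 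With separation in hand a Schur test on the Gram matrix $\langle k_\lambda,k_{\lambda'}\rangle\asymp e^{-\frac a2(\lambda-\lambda')^2}$ gives the Bessel bound, so $\{k_\lambda\}_{\lambda\in\Lambda}$ is in fact a Riesz sequence. For $D^{+}(\Lambda)\le1$ I would invoke a comparison (Ramanathan--Steger) argument: the Gaussian decay of the kernels endows $V^2$ with the homogeneous approximation property, and for each $\varepsilon>0$ the dilated lattice $(1+\varepsilon)^{-1}\Z$ has lower density $1+\varepsilon>1$, hence is sampling by Theorem~\ref{thm2}(i), i.e.\ its kernels form a frame. Comparing a Riesz sequence with a frame then yields $D^{+}(\Lambda)\le1+\varepsilon$ for every $\varepsilon$, whence $D^{+}(\Lambda)\le1$.

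For the sufficiency part (i) the guiding idea is that every subfamily of a Riesz basis is a Riesz sequence, so it suffices to embed $\Lambda$ into a complete interpolating sequence; this single statement gives both that $\Lambda$ is interpolating and that it can be enlarged to a complete interpolating sequence. Since $\Lambda$ is separated with $D^{+}(\Lambda)<1$, one fills the gaps to produce a separated superset $\Gamma\supseteq\Lambda$ with $D^{-}(\Gamma)>1$; by Theorem~\ref{thm2}(i) the kernels $\{k_\gamma\}_{\gamma\in\Gamma}$ form a frame, so there is ample room and completeness available. The task is then to pass to a set $\Lambda_0$ with $\Lambda\subseteq\Lambda_0$ whose kernels form a Riesz basis, after which $\Lambda_0$ is the sought complete interpolating sequence.

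The main obstacle is precisely this step, equivalently the lower Riesz bound $\|\sum_\lambda c_\lambda k_\lambda\|_{V^2}\ge A\,\|(c_\lambda)\|_{\ell^2}$ for the sparse family: it is \emph{not} a soft consequence of sampling, since an oversampling frame is never Riesz--Fischer and a general frame need not contain any Riesz basis. To overcome it one must exploit the fine complex-analytic structure of $V^2$, building a generating entire function for $\Lambda$ and interpolating by a division/series argument in the associated small Fock space, where the critical density $1$ is governed by an Avdonin-type condition. Once this lower bound is in hand the same machinery yields a complete interpolating sequence $\Lambda_0\supseteq\Lambda$, and together with the Bessel bound from separation this completes both parts.
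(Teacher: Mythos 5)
Your part (ii) is essentially correct, but it takes a genuinely different route from the paper. The paper deduces $D^+(\Lambda)\le 1$ by transferring the problem to the small Fock space $\ff_a$ (using the compactness of the operator $K_-$ from Claim~2 of the sufficiency proof, and then the interpolation results of \cite{bdhk}), whereas you run the classical functional-analytic package: Baire category to produce uniformly bounded interpolating functions, hence separation; a Schur test on the Gram matrix $\langle k_\lambda,k_{\lambda'}\rangle\asymp e^{-\frac a2(\lambda-\lambda')^2}$ for the Bessel bound; surjectivity of the analysis map plus Bessel giving, by duality, the lower Riesz bound; and finally a Ramanathan--Steger comparison of the Riesz sequence $\{k_\lambda\}_{\lambda\in\Lambda}$ against the frames coming from the sampling lattices $(1+\vep)^{-1}\Z$ (sampling by Theorem~\ref{thm2}(i)). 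This is in substance the ``well-known'' argument the paper attributes to \cite{AG01}; it is legitimate, provided you treat the comparison theorem, including the verification of the homogeneous approximation property (which needs decay of the \emph{dual} frame elements as well, e.g.\ via off-diagonal decay of the inverse Gram matrix), as known background.

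Part (i), however, has a genuine gap. You correctly reduce the statement to embedding $\Lambda$ into a complete interpolating sequence, and you correctly diagnose that your first idea --- enlarging $\Lambda$ to a set $\Gamma$ with $D^-(\Gamma)>1$ and exploiting the resulting frame --- cannot work, since a frame need not contain a Riesz basis. But your fallback is only a gesture: ``building a generating entire function for $\Lambda$ and interpolating by a division/series argument'' is never carried out, and as stated the logic is backwards, because generating functions are attached to \emph{complete} interpolating sequences, not to sparse sequences such as $\Lambda$; one must complete $\Lambda$ first. The missing concrete step --- and it is exactly what the paper does in Section~\ref{section5} --- is combinatorial rather than complex-analytic: show that any separated $\Lambda$ with $D^+(\Lambda)<1$ can be enlarged to a separated sequence $\Lambda'=\{m+\delta_m\}_{m\in\Z}$ with $(\delta_m)\in\ell^\infty$ and $\sup_n \frac1N\big|\sum_{k=n+1}^{n+N}\delta_k\big|\le\delta<\frac12$ for some $N$ (a Seip-type construction, cf.~\cite{seip} and \cite{bdhk}). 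Once this enlargement lemma is in hand, Theorem~\ref{main1} --- which is available to you, since its proof nowhere uses Theorem~\ref{thm-int} --- immediately gives that $\Lambda'$ is a complete interpolating sequence, and your own observation that a subfamily of a Riesz basis is a Riesz sequence finishes both assertions of part (i). Without this lemma (or some substitute for it), part (i) remains unproved.
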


One of the main insights obtained in \cite{grs1, grs2} is the similarity of $V^2$ with the Paley--Wiener space $PW$
with respect to sampling and interpolation. In particular, for both
$V^2$ and $PW$ the same density conditions hold. For $PW$ these go
back to Beurling, Kahane, and Landau~\cite{beur89,Kah62,landau67,seip04}.  

Theorems~\ref{thm2} and \ref{thm-int} provide an almost
characterization of sampling sets and of interpolating sets, but they  leave
open the case of critical density. If a set is simultaneously
sampling and interpolating, then  $D(\Lambda ) = 1$. The case of the
critical density is much more subtle, because anything may
happen.  For $PW$ the problem of complete interpolating sequences  was solved  in~\cite{hnp}
and~\cite{OS02}, for $V^2$ it was open so far. 

Our main result is a complete and  explicit description of complete
interpolating sequences for $V^2$. Moreover, this characterization can
be extended to the larger  class of complex Gaussians
$$
g_\cv (z) = e^{-\cv z^2},  \qquad c=a+ib, \ \ a>0, \ \ b\in \R.
$$ 
We denote the space $V^2(g_\cv )$ by $V_c^2$. % , when necessary, but
% usually will omit the reference to the parameter $c$.

It is easy to see that for $f(z) = \sum_{n\in \Z} c_n e^{-\cv (z-n)^2}$ we still have 
$\|f\|_{L^2(\R)} \asymp \|(c_n)\|_{\ell^2(\Z)}$ and so 
we can define the norm in $V_\cv ^2 $ by $\|f\|_{V^2_\cv  } :=  \|(c_n)\|_{\ell^2(\Z)}$.

\begin{theorem}
\label{main1}  
Given $\cv \in\co$ with $a= \mathrm{Re}\, c >0$,  an increasing sequence $\Lambda
\subset\R $ is a complete interpolating sequence for $V^2_\cv $, if and only if $\Lambda$ 
is separated and there exists an enumeration $\Lambda=\{\lambda_n\}_{n\in \Z}$, $\lambda_n = n+\delta_n$, $n\in\Z$,
such that
   \begin{enumerate}
   \item[(a)] $(\delta_n)_{n\in\Z}\in \ell^\infty$;
   \item[(b)] there exists $N\geq 1$ and $\delta>0$ such that
$$
\sup_{n\in\Z} \frac{1}{N}\Big|\sum_{k=n+1}^{n+N}\delta_k\Big|\leq \delta<\frac{1}{2}.
$$
   \end{enumerate}
   \end{theorem}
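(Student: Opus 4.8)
The plan is to reduce the complete interpolating property to the invertibility of the sampling matrix and then to a Riesz basis problem on the torus, where the Gaussian structure produces an explicit theta-function symbol. Since $f(z)=\sum_n c_n g_\cv(z-n)$ and $\|f\|_{V^2_\cv}=\|(c_n)\|_{\ell^2}$, the sequence $\Lambda=\{\lambda_n\}$ is a complete interpolating sequence if and only if the operator $A\colon \ell^2(\Z)\to\ell^2(\Z)$ with matrix $A_{mn}=g_\cv(\lambda_m-n)=e^{-\cv((m-n)+\delta_m)^2}$ is bounded and boundedly invertible. Boundedness follows from the off-diagonal Gaussian decay of $A$ via the Schur test once $\Lambda$ is separated (and by Theorem~\ref{thm2} separation is forced), so the whole content lies in the invertibility of $A$. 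First I would pass to the Fourier side: writing $\widehat{f}=\widehat{g_\cv}\,C$ with $C(\xi)=\sum_n c_n e^{2\pi i n\xi}\in L^2(\T)$ and periodizing, one obtains
\[
f(\lambda_m)=\int_0^1 C(\xi)\,\psi_m(\xi)\,d\xi,\qquad \psi_m(\xi)=e^{2\pi i\lambda_m\xi}\,\Phi(\xi,\delta_m),
\]
where $\Phi(\xi,\delta)=\sum_{k\in\Z}\widehat{g_\cv}(\xi+k)e^{2\pi i\delta k}$. Thus $\Lambda$ is a complete interpolating sequence precisely when $\{\psi_m\}$ is a Riesz basis of $L^2(\T)$. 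A direct computation via the Jacobi triple product identifies $\Phi(\cdot,\delta)$ as a theta function and shows it is zero-free on $\T$ for every real $\delta\not\equiv\tfrac12\pmod1$, while for $\delta\equiv\tfrac12$ it has a single simple zero at $\xi=\tfrac12$. This is where the threshold $\tfrac12$ enters: for a constant shift $\delta_n\equiv\delta$ the matrix $A$ is Toeplitz with symbol $\psi_0$, and it is invertible if and only if $|\delta|<\tfrac12$.

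For the sufficiency direction I would run an Avdonin-type blocking argument to upgrade this constant-shift statement to the averaged condition (b). Partition $\Z$ into consecutive blocks of length $N$ and, on each block, compare $\delta_k$ with its block mean $\overline{\delta}$, which by (b) satisfies $|\overline{\delta}|\le\delta<\tfrac12$. The model system built from the piecewise-constant shifts is a Riesz basis, uniformly over the blocks, by the constant-shift analysis together with the zero-freeness of $\Phi$. The deviations $\delta_k-\overline{\delta}$ are then absorbed as a controlled perturbation: using the $\ell^\infty$ bound (a), the smoothness of $(\xi,\delta)\mapsto\Phi(\xi,\delta)$, and summation by parts over each block, one estimates the perturbation operator and invokes a Paley--Wiener/Bari stability theorem for Riesz bases. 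The averaging is essential here, as it is precisely what turns the a priori large individual displacements into a small (or compact) perturbation of an invertible operator.

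For the necessity direction I would argue by contradiction. That $(\delta_n)\in\ell^\infty$ and that $\Lambda$ is separated follow from Theorems~\ref{thm2}--\ref{thm-int} and $D(\Lambda)=1$. If (b) failed, then for every $N$ there would be arbitrarily long blocks whose mean displacement is at least $\tfrac12$; on such a block the local symbol approaches the critical configuration at which $\Phi(\tfrac12,\tfrac12)=0$, and I would construct quasi-null test functions $C$ concentrated in frequency near $\xi=\tfrac12$ and in space on the offending block, for which $\sum_m|f(\lambda_m)|^2=o(\|C\|^2)$. This contradicts the Riesz lower bound, and hence the sampling inequality, yielding (b) with the sharp constant.

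The main obstacle is the sharp constant $\tfrac12$, and correspondingly the necessity direction. Because $\tfrac12>\tfrac14$, the classical Kadec--Avdonin machinery for $PW$ cannot be transported: the extra room comes entirely from the fact that the Gaussian symbol $\Phi(\xi,\delta)$ vanishes only at the single point $(\tfrac12,\tfrac12)$, rather than along the larger critical set responsible for the $\tfrac14$ threshold in the Paley--Wiener case. Making this precise — both that averaged displacement strictly below $\tfrac12$ suffices and that failure of the averaged bound forces a genuine breakdown of the Riesz lower bound — requires quantitative control of the theta function $\Phi$ near its zero, and is where the order-$2$ (Fock-type) nature of $V^2_\cv$, rather than the order-$1$ Paley--Wiener theory, must be exploited.
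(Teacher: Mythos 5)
Your starting reduction is correct as far as it goes: periodizing against the Gaussian shows that $\Lambda$ is a complete interpolating sequence if and only if $\{e^{2\pi i\lambda_m\xi}\,\Phi(\xi,\delta_m)\}_{m\in\Z}$ is a Riesz basis of $L^2(\T)$, and the constant-shift case does reduce to a Laurent (bi-infinite Toeplitz) operator whose theta-function symbol vanishes exactly when $\delta\equiv\tfrac12 \pmod 1$. This is a legitimate alternative to the paper's substitution $w=e^{2\cv z}$, which instead splits $f=f_-+c_0e^{-\cv z^2}+f_+$ and sends the two halves into the small Fock space $\ff_a$. But from that point on you assert precisely the hard steps instead of proving them. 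In the sufficiency direction, the deviations $\delta_k-\overline{\delta}$ from a block mean are only \emph{bounded} (condition (a)), not small, so no Paley--Wiener/Bari stability theorem applies to absorb them; and your piecewise-constant ``model system'' is not a Laurent operator, so the constant-shift computation yields no uniform Riesz bounds for it --- its Riesz basis property is essentially an instance of the theorem being proved, which makes the argument circular. Upgrading from constant shifts to shifts small in the mean is exactly the content of Avdonin's theorem, and no soft perturbation argument is known to accomplish it. The paper avoids re-proving Avdonin by quoting the characterization of complete interpolating sequences for $\ff_a$ (Theorem~\ref{main2}, from \cite{bdhk}), and even then substantial work remains: compactness of the cross terms $K_\pm$ (Claim 2), so that $J$ is a compact perturbation of the isomorphism $U$, and injectivity of $J$ (Claim 3), proved via the explicit lower bound \eqref{com1} for the generating function $G_+$ and the maximum principle.

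The necessity direction has the same gap in sharper form, plus one outright error. A long block with mean displacement near $\tfrac12$ does \emph{not} place the configuration locally close to the critical constant-shift configuration: the individual $\delta_k$ in the block are arbitrary bounded numbers and the weight $\Phi(\xi,\delta_k)$ varies with $k$, so there is no single ``local symbol'' degenerating at $(\tfrac12,\tfrac12)$, and no obvious quasi-null function to build. That large block means must nevertheless destroy the Riesz bounds is exactly the surprising rigidity the theorem expresses; the analogous implication is \emph{false} in $PW$ (where (a)+(b) are far from necessary), so it cannot follow from the frame-theoretic generalities you invoke and must use the Fock-type structure quantitatively. The paper extracts it from Theorem~\ref{main2} via a Fredholm deficit/excess argument (Step 1 of Section~\ref{section4}, using the division property and Lemma~\ref{lemadd} to add or delete finitely many points) followed by a counting argument showing that the number of added and removed points coincide (Step 2). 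Finally, condition (a) does not follow from Theorems~\ref{thm2}--\ref{thm-int} and $D(\Lambda)=1$ as you claim: the separated sequence $\lambda_n=n+\mathrm{sign}(n)\log(1+|n|)$ has $D^{\pm}(\Lambda)=1$, yet its counting function deviates from that of $\Z$ by $\sim\log|x|$, so no enumeration makes $(\delta_n)$ bounded. In the paper, (a) too is a \emph{conclusion} of the Fock-space characterization, not of density reasoning.
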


Theorem~\ref{main1} is stronger than the results
   in~\cite{grs1}. Indeed, as a corollary, we will  deduce the  density results for sampling or interpolation
obtained previously in \cite{grs1} and extend them to the spaces generated by complex Gaussians.  

At least to us, Theorem~\ref{main1} is quite surprising, as it shows a
marked difference between $V^2_\cv $ and $PW$. The characterizing
conditions go back to Avdonin who showed that conditions (a) and (b)
are sufficient for complete interpolating sequences in $PW$. However,
they are far from necessary in $PW$. By contrast, in the
shift-invariant spaces $V^2_\cv $  these conditions provide a complete
characterization. 
   
To prove Theorem~\ref{main1}, we reduce the problem to the study of
similar problems in some Fock-type space of entire functions. After   
this reduction is done, one can apply the description of complete interpolating sequences in this space obtained in \cite{bdhk}.
\medskip

%%%%%%%%%%%%%%%%%%%%%%%%%%%%%%%%%%%%%%%%%%%%%%%%%%%%%%

\section{Unitary equivalence with a Fock-type space}
\label{section2} 

For a parameter $a>0$, we consider  the Fock-type space of entire
functions, sometimes referred to 
as a {\it small Fock space}, defined by 
$$
\ff =  \ff _a = \bigg\{F \ \text{entire}: \  \ \|F\|^2_{\ff _a} = 
\frac{1}{2 \sqrt{2\pi a } }\int_{\mathbb{C}} |F(w)|^2 e^{-\frac{1}{2a}(\log |w|)^2 }dm_2(w) < \infty \bigg\},
$$
where $m_2$ is the Lebesgue measure $\frac{1}{\pi} dxdy$.
A simple computation shows that for a function $F(w) = \sum_{n\ge 0} b_n w^n$ one has
$$
\|F\|^2_{\ff _a } = \sum_{n\ge 0} |b_n|^2e^{2a (n+1)^2}. 
$$

In what follows we assume $\cv \in\co $ to be fixed. %  and write $V^2$ in
% place of $V_\cv ^2$ 
% to simplify the notations.
To see the connection between $\ff _a$ and
$V_\cv ^2$, we  note that 
\begin{equation*}
  \label{eq:c1}
\sum_{n\in \Z} c_ne^{-\cv (z-n)^2} = e^{-\cv z^2} \sum_{n\in \Z} c_n
e^{-\cv n^2 } e^{2\cv nz}.  
\end{equation*}
  Introducing  a new variable $w=e^{2\cv z}$,  $V ^2_\cv $ becomes unitarily equivalent to the 
space of functions representable as
$$
\bigg\{ g(w) = \sum_{n\in \Z} d_n w^n: \ \  \|g\|^2 : = \sum_{n\in \Z}
|d_n|^2 e^{2a n^2} <\infty\bigg\}.
$$
This space consists of functions analytic in $\co\setminus\{0\}$. It
is  more convenient, however, to split this space 
into three parts, corresponding to positive and negative powers of
$w$, and the constant term.  Namely, for $f\in V^2_\cv $ we write
\begin{align}
f(z) = \sum_{n\in \Z} c_ne^{-\cv (z-n)^2} & = \sum_{n<0} c_ne^{-\cv (z-n)^2 } + c_0 e^{-\cv z^2 } 
+ \sum_{n>0} c_ne^{-\cv (z-n)^2 } \notag \\
& = f_-(z) + c_0 e^{-\cv z^2 }  + f_+(z). \label{eq:d2}
\end{align}

With respect to this decomposition $V^2_\cv $ splits into a direct %orthogonal
sum $V^2_\cv  = V^2_-\oplus e^{-\cv z^2 }\co \oplus V^2_+$,
which is, in terms of the coefficients, simply $\ell^2(\Z) = \ell^2(\N_-) \oplus\co\oplus\ell^2(\N)$.

Next, for $w=e^{2\cv z}$, set
$$
F_+(w) = \sum_{n=1}^\infty c_n e^{-\cv n^2} w^{n-1}, \qquad 
F_-(w) = \sum_{n=1}^\infty c_{-n} e^{-\cv n^2} w^{n-1} \, ,
$$
then in view of \eqref{eq:d2} we can write
$$
f(z) = e^{-\cv z^2} \big(w^{-1} F_-(w^{-1}) +c_0 +wF_+(w)\big)\, .
$$
 
It follows from the discussion above that $F_+, F_- \in \ff _a$ and
$$
\|F_+\|_{\ff _a} = \Big( \sum _{n>0} |c_n e^{-\cv n^2}|^2 e^{2an^2}
\Big)^{1/2}= \|c\|_2 =  \|f_+\|_{V^2_{\cv }}\, ,
$$
and $\|F_-\|_{\ff _a} = \|f_-\|_{V_\cv ^2}$. In addition every function $F_+, F_- \in \ff _a$ can appear in this representation and
$$
w F_+(w) = e^{\cv z^2} f_+(z)\Big|_{e^{2\cv z} = w}.
$$

This representation of elements in $V_\cv ^2$ makes it possible to
reduce the study of complete interpolating sequences in $V_\cv ^2$ to
the same problem in the Fock-type space $\ff _a $. 

Let us introduce the notions of sampling and interpolation for the
space $\ff _a$. Let $\varphi(z) = \frac{1}{4a} \log^2 |z|$. In the
following we drop the reference to the parameter $a$, when it is not
needed. 
We say that the sequence $\{w_n\}$ is sampling for $\ff $, if 
\begin{equation}
  \label{eq:c2}
\sum_{n} (1+|w_n|^2) e^{-2\phi(w_n)} |F(w_n)|^2 \asymp \|F\|^2_{\ff }, \qquad F\in \ff .  
\end{equation}
Analogously, $\{w_n\}$ is interpolating for $\ff$ if for every sequence $\{a_n\}$ satisfying
$\sum_{n} (1+|w_n|^2) e^{-2\phi(w_n)} |a_n|^2 <\infty$ there exists
$F\in\ff$ with $F(w_n) = a_n$. Recall from  \cite[Lemma~2.7]{bl} that the reproducing kernel
$k_w$ of $\ff $ has the norm
\begin{equation}
  \label{eq:c4}
  \|k_w\|_{\ff }^2 \asymp \frac{e^{2\phi (w)}}{1+|w|^2} \, .
\end{equation}
Thus the weights in the sampling inequality  are nothing  but
$\|k_{w_n}\|^{-2}_\ff$.   So the
norm equivalence 
\eqref{eq:c2} reads  as $\sum _n |\langle f, k_{w_n} \rangle _{\ff}|^2
\|k_{w_n}\|_{\ff} ^{-2} 
\asymp \|f\|_\ff ^2$. We may thus  formulate  the questions about sampling
and interpolation in the equivalent language of frames and Riesz sequences of 
normalized reproducing kernels $\tilde k_{w_n}
= k_{w_n}/\|k_{w_n}\|_{\ff}$ in $\ff$.
 Namely, $\{w_n\}$ is sampling for $\ff$ if and only if  $\{\tilde k_{w_n}\}$  is
a frame in $\ff$, while $\{w_n\}$ is interpolating if and only if $\{\tilde k_{w_n}\}$ is a Riesz sequence.
Finally, $\{w_n\}$ is a complete interpolating sequence if and only if $\{\tilde k_{w_n}\}$ is a Riesz basis in $\ff$.

We now  formulate the description of complete interpolating sequences
for $\ff _a$.
First it was shown in \cite{bl} that the sequence $\{e^{2an}\}_{n\ge
  1}$ is a complete interpolating sequence for $\ff _a$.
Then a full characterization of  complete interpolating sequences was
obtained in~~\cite{bdhk}. Remarkably, this characterization is formulated in terms of perturbations
of the complete interpolating sequence $\{e^{2an} \}_{ n\geq 1 }$,
somewhat in the spirit of the  theorems of Kadets and 
Avdonin for complex exponentials   (although  this condition
is only sufficient in $PW$).  
 % in the Paley--Wiener setting one can find only sufficient conditions in these terms).
Let $\{w_n\}_{n\ge 1}$ be a sequence such that 
$0<|w_n|\le |w_{n+1}|$ and let 
$$w_n = e^{2an} e^{\delta_n} e^{  i \theta_n}
$$ 
with $\delta_n, \theta_n \in\R$. In \cite{bdhk} the following result was proved.

\begin{theorem} \textup{\cite{bdhk}} 
\label{main2}  
A sequence $\{w_n\}_{n\ge 1}$ is a complete interpolating sequence for
$\ff _a$\footnote{Note that in ~\cite{bdhk} the normalization for the
small Fock space is different; to formulate the results from~\cite{bdhk}
one needs to set $\alpha = 1/(4a)$.},  
if and only if 
   \begin{enumerate}
   \item[(i)] $\{w_n\}$ is $\ff _a$-separated, i.e., there exists
     $\gamma >0$ such that $|w_m-w_n| \ge \gamma |w_n|$, $m\ne n$\textup;
   \item[(ii)] $(\delta_n)_{n\ge 1} \in \ell^\infty$\textup;
   \item[(iii)] there exists $N\geq 1$ and $\delta>0$ such that
$$
\sup_{n\ge 1} \frac{1}{N}\Big|\sum_{k=n+1}^{n+N}\delta_k\Big|\leq
\delta<a \, .
$$
   \end{enumerate}
   \end{theorem}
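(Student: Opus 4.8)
The plan is to work on the equivalent side of the correspondence set up above: $\{w_n\}$ is a complete interpolating sequence for $\ff_a$ if and only if the normalized reproducing kernels $\{\tilde k_{w_n}\}$ form a Riesz basis of $\ff_a$. I would attack this through the generating-function/Muckenhoupt route that is standard for Riesz bases of reproducing kernels in de Branges-type and Fock-type spaces. The governing principle I aim to establish is: $\{w_n\}$ is a complete interpolating sequence precisely when $\{w_n\}$ is $\ff_a$-separated and there is an entire generating function $G$ with simple zeros exactly at the $w_n$ whose associated weight $\omega = |G|^2 e^{-2\phi}$, tested against the natural measure, satisfies a local Muckenhoupt $(A_2)$ condition. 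The biorthogonal system to $\{k_{w_n}\}$ is then $\{G(\cdot)/(G'(w_n)(\cdot-w_n))\}$, and the Riesz basis property becomes the two-sided norm bound for the associated synthesis operator, which by a Hunt--Muckenhoupt--Wheeden type argument for the Cauchy transform is exactly the $(A_2)$ condition for $\omega$. The necessity of separation (i) is the easy part: a Riesz sequence of $\{\tilde k_{w_n}\}$ is uniformly minimal, and the off-diagonal kernel estimates coming from \eqref{eq:c4} force $|w_m-w_n|\ge \gamma|w_n|$ for $m\ne n$.

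Second, I would build $G$ and estimate its growth. For the model sequence $\{e^{2an}\}_{n\ge 1}$ the natural generating function is the canonical product $G_0(w)=\prod_{n\ge 1}(1-we^{-2an})$; its zero counting function is $n(r)\sim \frac{1}{2a}\log r$, which matches the space since $\int_1^r n(t)\,\frac{dt}{t}\sim \frac{1}{4a}\log^2 r=\phi(r)$, so by Jensen's formula $\log|G_0(w)| = \phi(w)+O(1)$ away from the zeros. For the perturbed sequence one takes $G(w)=\prod_{n\ge 1}(1-w/w_n)$. Conditions (ii) and (iii) are exactly what is needed to keep $\log|G|-\phi$ bounded in the mean and thus to guarantee that $G$ is still a genuine generating function in the correct growth class; without $(\delta_n)\in\ell^\infty$ the product leaves this class, and this is where the necessity of (ii) enters.

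The decisive step, and the main obstacle, is the equivalence between the Riesz basis property and the $(A_2)$ condition for $\omega$, together with the translation of $(A_2)$ into the arithmetic condition (iii). I would perform the radial reduction $w=e^{2ax}$: then $\frac{dw}{w}=2a\,dx$, one has $\phi(e^{2ax})=ax^2$ and $\log|G_0(e^{2ax})|=ax^2+O(1)$, so the factor $e^{-2\phi}$ cancels the leading Gaussian growth and leaves a weight on a half-line of $\R$ that records only the fluctuations $\delta_n$. The zeros of $G$ sit at $x=x_n$ with $\rea x_n=n+\frac{\delta_n}{2a}$, so $\omega$ is, up to bounded factors, a weight with power-type behaviour at each perturbed integer --- precisely the situation treated by Khrushchev, Nikolskii and Pavlov for $PW$. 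The polynomial factor $(1+|w|^2)$ from \eqref{eq:c4} becomes $e^{4ax}$ under the substitution, which varies by a bounded factor over any interval of length at most $N$ and therefore drops out of the \emph{local} $(A_2)$ balance; this is why only the bounded-scale averages of the $\delta_n$ matter. The $(A_2)$ condition for such an almost-periodically perturbed weight then holds if and only if the mean fluctuation of the shifts stays strictly inside the admissible Muckenhoupt exponent window, and by the scaling $\phi(e^{2ax})=ax^2$ this window has half-width $a$, yielding $\sup_n \frac1N\big|\sum_{k=n+1}^{n+N}\delta_k\big|\le \delta<a$.

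The necessity direction runs the same chain backwards: a Riesz basis forces $(A_2)$ for $\omega$, hence $\ell^\infty$ control of the $\delta_n$ (otherwise $G$ fails to lie in the growth class and the diagonal normalization $\|g_n\|^2\,\|k_{w_n}\|^2\asymp 1$ breaks down) together with the arithmetic-mean bound (iii). The hard technical content is entirely in establishing the Riesz-basis $\Leftrightarrow (A_2)$ equivalence in this non-flat Fock weight --- where the Cauchy transform must be controlled against the genuine measure rather than Lebesgue measure --- and in pinning down the exact threshold $a$; this is the work carried out in \cite{bdhk}, building on the identification of the model complete interpolating sequence $\{e^{2an}\}$ in \cite{bl}.
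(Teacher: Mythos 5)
A point of order first: the paper contains \emph{no} proof of Theorem~\ref{main2} --- the statement is imported from \cite{bdhk}, with only a footnote adjusting the normalization $\alpha=1/(4a)$ --- so your sketch can only be compared with the argument of \cite{bdhk}, traces of which the paper reproduces in Section~\ref{section3}. Against that benchmark there is a genuine gap: the entire content of the theorem sits in your two asserted equivalences --- the Riesz basis property of $\{\tilde k_{w_n}\}$ $\Leftrightarrow$ a local $(A_2)$ condition for $\omega=|G|^2e^{-2\phi}$, and local $(A_2)$ $\Leftrightarrow$ condition (iii) with the sharp threshold $a$ --- and you prove neither, explicitly deferring ``the hard technical content'' to \cite{bdhk}. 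That deferral misdescribes the source: \cite{bdhk} does not run a Hunt--Muckenhoupt--Wheeden argument for a Cauchy transform. Its proof goes through direct estimates of lacunary canonical products, namely the Borichev--Lyubarskii bound $|G_0(w)|\asymp e^{\phi(w)}\,{\rm dist}\,(w,W_0)/(1+|w|^{3/2})$ from \cite{bl} for the model sequence, and the multiplicative comparison $|G_+(w)|/|G_0(w)|\asymp \exp\big(-2a\sum_{k=1}^{m}\delta_k\big)\,{\rm dist}\,(w,W_+)/{\rm dist}\,(w,W_0)$ on the annulus $e^{a(2m-1)}\le|w|\le e^{a(2m+1)}$ --- both quoted verbatim in Section~\ref{section3} --- exploiting that the sequence has essentially one point per annulus, so that only the partial sums $\sum_{k\le m}\delta_k$ enter. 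The singular-integral equivalence you designate as the decisive step is therefore not available to be cited; it would have to be proved, and you give no argument for it.

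Worse, the proposed route is in tension with the very analogy it leans on. You claim the radial reduction lands ``precisely'' in the Khrushchev--Nikolskii--Pavlov situation \cite{hnp} and that for such weights $(A_2)$ holds \emph{if and only if} the bounded-scale averages of the $\delta_n$ are small. In $PW$ that biconditional is false: Avdonin's condition is sufficient but not necessary (already $\lambda_n=n+\beta$ with $|\beta|\ge 1/4$, a translate of $\Z$, is a complete interpolating sequence for $PW$ that violates the averaged bound), and the paper stresses exactly this contrast in Section~\ref{section1}. If the small Fock problem genuinely reduced, up to bounded distortions of the weight, to a KNP-type $(A_2)$ condition, then (ii)--(iii) could not be necessary --- yet necessity is the hard half of the theorem, and in $\ff_a$ even the constant shift $\lambda_n=n+1/2$ fails to give sampling (Janssen \cite{Jan93}, cited after Corollary~\ref{corkad}). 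The mechanism that actually forces (ii)--(iii) is the rigidity of $\ff_a$ (lacunarity of zero sets, the division property), which is entirely absent from your sketch. Relatedly, your Jensen heuristic $\log|G_0|=\phi+O(1)$ silently drops the factor $(1+|w|)^{-3/2}$, and you dismiss the factor $(1+|w|^2)$ from \eqref{eq:c4} as immaterial to the ``local $(A_2)$ balance''; but in the actual argument (compare \eqref{com1} and the estimate of $H$ following \eqref{com2}) precisely these polynomial factors, balanced against $\exp\big(-2a\sum_{k\le m}\delta_k\big)\asymp|w|^{-\delta}$, determine the critical exponent. The place where the sharp constant $a$ is won or lost is exactly the place your sketch waves through.
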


Note that the result does not depend on the arguments of $w_n$, but only on their moduli. For the description of
complete interpolating sequences for a more general class of Hilbert
spaces of entire functions see  
\cite{bms}.
\medskip

%%%%%%%%%%%%%%%%%%%%%%%%%%%%%%%%%%%%%%%%%%%%%%%%%%%%%%

\section{Proof of Theorem \ref{main1}: Sufficiency}
\label{section3} 

Assume that $\Lambda  \subset\R $ is separated and that 
there exists an enumeration $\Lambda=\{\lambda_m\}_{m\in \Z}$, $\lambda_m= m+\delta_m$, $m\in\Z$,
such that conditions (a) and (b) of Theorem~\ref{main1} are satisfied. We need to show that the mapping
\begin{equation}
\label{dzhi}
J:\  (c_n)_{n\in \Z}\mapsto (f(\lambda_m))_{m\in\Z}
\end{equation} 
is an isomorphism of $\ell^2(\Z)$ 
onto itself. Again we identify the sequence $(c_n)$ and the
corresponding function $f\in V^2_\cv$. 

Recall that $\phi (w) = \tfrac{1}{4a} (\log |w|)^2$, and put
$$
w_m = e^{2\cv  \lambda_m} = e^{2am} e^{2a \delta _m} e^{2ib(m+\delta
  _m)}  \, .
$$
The Avdonin-type condition (iii) of Theorem~\ref{main2} reads as
$\frac{1}{N}\Big|\sum_{k=n+1}^{n+N}(2a \delta_k)  \Big|\leq
\delta< a$, which amounts  precisely to the assumption (b) of
Theorem~\ref{main1}. Thus  each of the sequences $\{w_m\}_{m\ge 1}$ and 
$\{w_{-m}^{-1}\}_{m\ge 1}$ is a complete interpolating sequence for
$\ff _a$. 
\medskip

{\it Claim 1. The mapping $U_+: (c_n)_{n\ge 1} \mapsto (f_+(\lambda_m))_{m\ge 1}$ is an isomorphism 
from $\ell^2(\N)$ onto itself. } Indeed, since $\phi (w) = (\log
|w|)^2/(4a) = \lambda ^2$ whenever $w = e^{2\cv  \lambda}$,  we have  
\begin{equation}
\label{dro}
f_+(\lambda_m) = e^{-\cv \lambda_m^2} w_m F_+(w_m) =  
e^{-\phi(w_m)} e^{- ib\lambda_m^2} w_m F_+(w_m),
\end{equation}
and we conclude that  $(f_+(\lambda_m))_{m\ge 1} \in \ell^2(\N)$,
because $\{w_m\}$ is sampling,  and 
that every  sequence in  $\ell^2(\N)$ can be obtained in this way from
some function in $F_+$, because $\{w_m\}$ is interpolating. 
Thus $U_+$ is one-to-one and onto. Similarly 
the mapping  $U_-: (c_n)_{n\le -1} \mapsto (f_-(\lambda_m))_{m\le -1}$
is an isomorphism on  $\ell^2(\N_-)$. 
\medskip

{\it Claim 2. The mapping $K_+: (c_n)_{n\ge 1} \mapsto (f_+(\lambda_m))_{m\le - 1}$ is compact from
$\ell^2(\N)$ to $\ell^2(\N_-)$. } Since 
$$
f_+(\lambda_m) = \sum_{n\ge 1} c_n e^{-\cv (m+\delta_m -n)^2} \, ,
$$
the matrix of $K_+$ has the entries $e^{- \cv (m+\delta_m -n)^2} $,
$m<0,n>0$.  The boundedness of $\delta_m$ yields that  $\sum_{m<0,
  n>0} e^{-a(|m| +n -\delta_m)^2} <\infty$,  
thus $K_+$  is a Hilbert--Schmidt operator. Similarly, the operator 
 $K_-: (c_n)_{n\le -1} \mapsto (f_-(\lambda_m))_{m\ge 1}$ is compact from
$\ell^2(\N_-)$ to $\ell^2(\N)$. 
\medskip

Now we define the operator $U$  on $\ell^2(\Z) = \ell^2(\N_-) \oplus\co\oplus\ell^2(\N)$ by the formula 
\begin{equation}
\label{u}
U:\ \big((c_n)_{n\le -1};\, c_0;\, (c_n)_{n\ge 1}\big) \mapsto  
\big((f_-(\lambda_m))_{m\le -1};\, f(\lambda _0) ;\, (f_+(\lambda_m))_{m\ge 1}\big)
\end{equation}
By Claim 1 $U$ is an isomorphism from $\ell^2(\Z)$ onto itself. At the same time, by Claim 2, 
the map $J: \, (c_n)_{n\in \Z}\mapsto (f(\lambda_m))_{m\in\Z}$ differs from $U$
by a compact operator, since $f(\lambda_m) = f_+(\lambda_m) + c_0
e^{-\cv \lambda _m^2}+  f_-(\lambda_m)$. Thus, to prove that $J$ 
is invertible it is sufficient to show that $J$ is one-to-one, which
is usually easier. In our context  it means that $\Lambda$
is a uniqueness set for the space $V^2_\cv $.
\medskip

{\it Claim 3. $\Lambda$ is a uniqueness set for the space $V^2_\cv$. }
Assume that $f\in V^2_\cv$ and $f(\lambda_m) =0$ for all $m$. Then 
the function 
$$
F(w) =  w^{-1} F_-(w^{-1}) +c_0 +wF_+(w)
$$
vanishes  on the set $w_m = e^{2c \lambda_m}$.

Since $W_+ = \{w_m\}_{m\ge 1}$ is a complete interpolating sequence for
$\ff$, there exists a so-called {\it generating function} $G_+$ for
this sequence. This is  a function with  simple zeros 
exactly at $w_m$, $m\ge 1$,  and no other zeros, such that $G_+\notin
\ff$, but $\frac{G_+}{w-w_m} \in \ff$ for  all $m$.  
In fact,  the system $\big\{\frac{G_+}{G'_+(w_m)(w-w_m)}\}_{m\ge 1}$ is the biorthogonal system in $\ff$ to the system of reproducing
kernels $\{k_{w_m}\}_{m\ge 1}$. 

We will need several estimates for the generating functions. First, let $G_0$
be the generating function for the sequence $W_0 = \{e^{2a m}\}_{m\ge 1}$.
It was shown in \cite[Lemma 2.6]{bl} that
$$
 |G_0 (w)|\asymp e^{\varphi(w)}\frac{{\rm dist}\,(w,W_0)}{1+|w|^{3/2}}, \qquad w\in \co.
$$
Using standard estimates of lacunary canonical products it is easy to show (see \cite[Section 3, p. 1373]{bdhk})
that for the generating function $G_+$ of the perturbed sequence $W_+ = \{w_m\}_{m\ge 1}$ 
with $|w_m| = e^{2am+2a\delta_m}$ one has 
$$
 \frac{|G_+(w)|}{|G_0(w)|}\asymp  \exp\left(-2a\sum_{k=1}^{m}\delta_k\right) \frac{{\rm dist}\,(w,W_+)}{{\rm dist}\,(w, W_0)}
$$
whenever $e^{a(2m-1)} \le |w| \le e^{a(2m+1)}$.
By condition (b) of Theorem~\ref{main1}, for sufficiently large $m$ we have 
$\exp(-2a\sum_{k=1}^{m}\delta_k) \ge \exp(-2a\delta m) \asymp |w|^{-\delta}$, 
where $\delta<\frac{1}{2}$. Hence, 
\begin{equation}
\label{com1}
|G_+(w)| \gtrsim  \frac{{\rm dist}\, (w, W_+)}{(1+|w|)^{\frac{3}{2}+\delta}}\, e^{\phi(w)}.
\end{equation}  
 
Analogously, we define the function $G_-$, the generating function of the sequence 
$W_- = \{w_n^{-1}\}_{n\le -1}$. Consequently, the zero set of the function 
$(w-w_0) G_+(w) G_-(w^{-1})$ is precisely $\tilde{\Lambda}=\{w_n: n\in \Z \} $. Since  by
assumption $F$ vanishes also on $\tilde{\Lambda }$,  $F$ possesses the factorization 
$$
F(w) = (w-w_0) G_+(w) G_-(w^{-1}) H(w)
$$
for some function $H$ analytic in $\co\setminus\{0\}$. We will show that $H$ is an entire function
which tends to $0$ at infinity, and thus must be  identically zero.
Clearly, $w^{-1} F_-(w^{-1} ) \to 0$, as $|w| \to \infty $ and
$$
|F_+(w)| = |\langle F_+, k_w \rangle | \leq \|F_+\|_\ff \, \|k_w\|_\ff
\lesssim \frac{e^{\phi (w)}}{1+|w|} \, .
  $$
by \eqref{eq:c4}. Consequently, 
\begin{equation}
\label{com2}
|F(w)| \lesssim 1+ |wF_+(w)| \lesssim |w| \frac{e^{\phi(w)}}{1+|w|} \le e^{\phi(w)}, \qquad |w| \to\infty.
\end{equation}
% Here we used the fact that $\|k_w\|_{\ff} \asymp \frac{e^{\phi(w)}}{1+|w|}$.
Thus, using \eqref{com1}, \eqref{com2},  and the fact that
$G_-(w^{-1})\to G_-(0) \ne 0$, $|w|\to\infty$, 
we get 
\begin{align*}
|H(w)| & = \frac{|F(w)|}{|(w-w_0) G_+(w) G_-(w)|} \\
&   \lesssim  e^{\phi(w)} \cdot  \frac{(1+|w|)^{\frac{3}{2}+\delta}}{|w|{\rm dist}\, (w, W_+)}\, e^{-\phi(w)}
\lesssim
\frac{(1+|w|)^{\frac{1}{2}+\delta}}{{\rm dist}\, (w, W_+)}, \qquad |w|\to\infty. 
\end{align*}
It follows from the $\ff$-separation of $W_+$ and the inclusion 
$W_+\subset \R$  that there 
exists a sequence of radii $r_k\to \infty$ such that 
${\rm dist}\, (w, W_+) \ge \gamma |w|$ when $|w| = r_k$, with $\gamma >0$ independent on $k$.  Since $\delta<\frac{1}{2}$, 
we conclude that $\max_{|w|=r_k} |H(w)| \to 0$, as $k\to\infty$.  The
maximum principle implies that also  $H(w) \to 0$, as  $|w| \to \infty$.  

Analogously, replacing $w$ by $w^{-1}$ and using the  estimate  \eqref{com1}
for $G_-$, we conclude that $H(w) \to 0$ as $w\to 0$. Consequently the
singularity of $H$ at $0$ is removable, and $H $ is thus entire and
bounded. Thus  $H\equiv 0$ and so $F\equiv 0$, as was to be shown. 
\qed
\medskip

To conclude we formulate a simple,  sufficient condition for complete
interpolating sets in the style of Kadets, see e.g.~\cite{young}.

\begin{corollary} \label{corkad}
  Assume that $|\delta _n| \leq \delta < \frac{1}{2}$ for all $n\in
  \Z $. Then $\{n+\delta _n\}$ is a complete interpolating sequence
  for $V^2_\cv $. 
\end{corollary}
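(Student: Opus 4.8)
The plan is to deduce the corollary as an immediate special case of Theorem~\ref{main1}: under the uniform bound $|\delta_n|\le\delta<\tfrac12$ all the hypotheses of that theorem are automatically satisfied, so nothing new needs to be proved. First I would record that the natural enumeration $\lambda_n=n+\delta_n$ is strictly increasing and separated. Indeed, for consecutive indices $\lambda_{n+1}-\lambda_n=1+\delta_{n+1}-\delta_n\ge 1-2\delta>0$ because $2\delta<1$, so $\Lambda$ is increasing; more generally, for $m\ne n$ one has $|\lambda_m-\lambda_n|\ge|m-n|-|\delta_m-\delta_n|\ge 1-2\delta>0$, which gives separation with a uniform gap. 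Thus the increasing enumeration required in Theorem~\ref{main1} is exactly the one given, with perturbations $\delta_n$.

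Next I would verify the two Avdonin-type conditions. Condition~(a), that $(\delta_n)\in\ell^\infty$, is immediate from the hypothesis, since $\sup_n|\delta_n|\le\delta$. For condition~(b) the key observation is that a pointwise bound is stronger than an averaged one: taking $N=1$, the quantity $\tfrac1N\bigl|\sum_{k=n+1}^{n+N}\delta_k\bigr|$ reduces to $|\delta_{n+1}|$, so that $\sup_{n\in\Z}|\delta_{n+1}|\le\delta<\tfrac12$ is precisely the assumption. Hence (b) holds with $N=1$ and the same constant $\delta$.

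With separation established and (a), (b) verified, Theorem~\ref{main1} applies directly and yields that $\{n+\delta_n\}_{n\in\Z}$ is a complete interpolating sequence for $V^2_\cv$, which completes the proof. There is essentially no obstacle here; the only thing worth noting is that the Kadets-type pointwise bound $|\delta_n|\le\delta<\tfrac12$ is a genuinely stronger requirement than the averaged Avdonin-type condition~(b), where one is free to choose any $N$ (in particular $N=1$), and this is exactly why the pointwise bound is sufficient.
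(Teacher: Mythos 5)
Your proof is correct and follows exactly the route the paper intends: the corollary is stated without proof precisely because it is an immediate special case of Theorem~\ref{main1}, obtained by checking separation (via $|\lambda_m-\lambda_n|\ge 1-2\delta>0$) and conditions (a) and (b) with $N=1$, which is what you do. Your closing remark that the pointwise Kadets-type bound is strictly stronger than the averaged Avdonin-type condition (b) is also accurate.
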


Note that in  the Paley--Wiener space $PW$ the corresponding result
holds with $\delta < 1/4$.  The constant $1/2$ is sharp, as the set
$\{n+1/2:n\in \Z \}$ is not a sampling sequence for $V^2_a$ with $a>0$~\cite{Jan93}. 
\medskip

%%%%%%%%%%%%%%%%%%%%%%%%%%%%%%%%%%%%%%%%%%%%%%%%%%%%%%

\section{Proof of Theorem \ref{main1}: necessity}
\label{section4} 

% {\color{blue} In the proof of necessity we will use the following statement about frames.

% \begin{proposition} 
% \label{exact} 
% Let $\{x_n\}_{n\ge 1}$ be a linearly independent system and a frame in a Hilbert space $H$. 
% If  $\{x_n\}_{n\ge 2}$ is not a frame, then  $\{x_n\}_{n\ge 1}$ is a Riesz basis in $H$. 
% \bigskip

% Another variant -- specifically for sampling and complete interpolating sequences in a space
% of analytic functions:
% \bigskip

% Let $\hh$ be reproducing kernel Hilbert space of analytic function in a domain $\Omega$
% and assume that $\hh$ has the division property.
% Assume that the sequence $\{w_n\}_{n\ge 1}$ of distinct points in $\Omega$
% is sampling for $\hh$ (i.e., $A\|f\|^2 \le \sum_n |f(w_n)|^2 \|k_{w_n}\|^{-2} \le  B \|f\|^2$ 
% for some $A,B>0$ and any $f\in\hh$), but $\{w_n\}_{n\ge 2}$ fails to be sampling. 
% Then $\{w_n\}_{n\ge 1}$ is a complete interpolating sequence for $\hh$.
% \end{proposition} 

% \begin{remark}
% \label{exact1}
% {\rm Obviously, it follows from Proposition \ref{exact} that if $W=\{w_n\}$ is a sampling sequence for $\hh$, but,
% for some finite subset $W_1$ of $W$, the sequence $W\setminus W_1$ is not sampling, 
% then there exists a subset $W_2\subset W_1$ such that $W \setminus W_2$ is a complete interpolating sequence. }
% \end{remark}
% }

For the proof of necessity we need to reverse the argument of the 
sufficiency part. Assume that $\Lambda$ is a complete interpolating sequence for $V^2_\cv$. This
means that  the operator $J$ given by \eqref{dzhi} is an isomorphism of
$\ell^2(\Z)$ onto  itself.

% \textbf{Step 1.}
Since a complete interpolating sequence for $V^2_\cv$  is always
separated,  % (see, e.g.,~\cite{})
the operators $K_+$ and $K_-$ defined in Claim 2 are compact, 
whence $U$ is a compact perturbation of the isomorphism $J$.

It follows that the kernel of $U$ has finite dimension and that the
range of $U$ has finite codimension:
\begin{equation}
  \label{eq:nov1}
{\rm dim}\,{\rm Ker}\, U <\infty \qquad \text{ and } \qquad {\rm
  codim}\,  {\rm  Range}\,
U < \infty \, .
  \end{equation}
%is a closed subspace of $\ell^2(\Z )$ of finite codimension.  

In the language of frame theory~\cite[Sec.~8.7]{heil} one speaks of the deficit
and the excess of the set of reproducing kernels. The condition
$\mathrm{Ker}\, U \neq \{0\}$ means that the reproducing kernels
associated to the sampling points $\lambda _n, n\in \Z$, are not
complete, but span a proper subspace (the \emph{deficit}). The finite codimension of the
Range of $U$ means that there are linear dependencies of the
reproducing kernels, in other words, too many functions for a Riesz
basis (the \emph{excess}). The finiteness condition of
\eqref{eq:nov1} implies that we can construct a Riesz basis of
reproducing kernels by adding and/or removing finitely many points.

In our context we will use an important property of the Fock space $\ff
$. Assume that  $F\in \ff$ satisfies  $F(w_1)= 0$ and  $w_2 \in \co , w_2 \neq w_1$. Then
the function $\tilde{F}(z) = \frac{z-w_2}{z-w_1} F(z)$ is again in
$\ff $. This property is called the \emph{division property} of $\ff $.

\medskip
\textbf{Step 1.} To start with the proof of necessity, assume that \eqref{eq:nov1}
holds.  Recall that $V_+^2 = \{f \in V^2_\cv:\, f=f_+\} \cong \ell^2(\N)$, and
consider  the restriction  $U_+ = U|_{V_+^2}$ of $U$ to the subspace 
$V_+^2$, given by $U_+(c_n)_{n\geq 1} = (f_+(\lambda _m))_{m\geq  1}$.    
From the properties of $U$ we  conclude that  the kernel
$\nn = \mathrm{Ker} \,  U_+$ 
in $V_+^2$ also is finite-dimensional and that ${\rm Range}\, U_+$ is a closed subspace of $\ell^2(\N)$ of finite
codimension.  Put $w_m =e^{2\cv \lambda_m}$, as before. If $U_+$ is an isomorphism
between $V^2_+$ and $\ell^2(\N)$, 
then,  by \eqref{dro}, $\{w_m\}_{m\ge 1}$ is a complete interpolating
sequence for $\ff$. If not, we distinguish two cases.

% \textbf{Step 2.} 
% We show that in the general case 
% $\{w_m\}_{m\ge 1}$ can be modified into  a complete interpolating
% sequence for $V^2_+$ by adding or deleting a finite set of points. 
\medskip

{\bf Case 1.} Assume that $U_+$ has a nontrivial kernel $\nn$. Then
$U_+|_{V_+^2 \ominus \nn}$ is an isomorphism on its image. Thus, 
$$
\sum_{m\ge 1} |f_+(\lambda_m)|^2 \asymp \|f_+\|_{V_+^2}^2, \qquad  f_+\in V_+^2 \ominus \nn.
$$
Note that $f\in \nn$ (i.e. $f_+(\lambda_m)=0$, $m\ge 1$) if and only if 
$F_+(w_m) = 0$, $m \ge 1$. 
By \eqref{dro}, we have
$$
\sum_{m\ge 1} (1+|w_m|^2) e^{-2\phi(w_m)} |F_+(w_m)|^2 \asymp \|F_+\|^2_{\ff}, \qquad F_+\in \ff\ominus \mm,
$$
where $\mm$ is the finite-dimensional subspace in $\ff$ which consists of all functions
vanishing on $\{w_m\}_{m\ge 1}$. 

We will need the following simple lemma. 

\begin{lemma} \label{lemadd}
Let $\mm = \{F\in \ff: \ F(w_m) = 0, m\ge 1\}$ 
and $M = {\rm dim}\, \mm <\infty$. Then there exists a set of points $\{\mu_k\}_{1\le k\le M}$ which is
a uniqueness set for $\mm$, while $\{\mu_k\}_{1\le k\le M-1}$ is a non-uniqueness set for $\mm$. 
\end{lemma}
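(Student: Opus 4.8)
The plan is to prove Lemma~\ref{lemadd} by induction on the dimension $M$ of the finite-dimensional space $\mm$, exploiting the division property of $\ff$ to peel off one zero at a time. The key observation is that $\mm$, being a finite-dimensional subspace of entire functions, cannot have a common zero at every point of $\co$; so one can select points at which successively smaller subspaces fail to vanish identically.

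First I would set up the inductive construction. The goal is to find $\mu_1,\dots,\mu_M$ so that the only $F\in\mm$ vanishing at all of them is $F\equiv 0$, while some nonzero $F\in\mm$ vanishes at $\mu_1,\dots,\mu_{M-1}$. I would build the points in reverse. Since $\dim\mm=M\ge 1$ and $\mm$ is spanned by genuine entire functions (not all identically zero on any set with a limit point), there is a point $\mu_M\in\co$ and a function $F_M\in\mm$ with $F_M(\mu_M)\neq 0$. Consider the subspace
$$
\mm_{M-1} = \{F\in\mm:\ F(\mu_M)=0\}.
$$
This is the kernel of the nonzero evaluation functional $F\mapsto F(\mu_M)$ on $\mm$, hence $\dim\mm_{M-1}=M-1$. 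Repeating, I choose $\mu_{M-1}$ so that some function in $\mm_{M-1}$ does not vanish at $\mu_{M-1}$, set $\mm_{M-2}=\{F\in\mm_{M-1}:F(\mu_{M-1})=0\}$ of dimension $M-2$, and continue down to $\mm_0=\{0\}$. By construction the functionals $F\mapsto F(\mu_k)$ are linearly independent on $\mm$, so $\{\mu_1,\dots,\mu_M\}$ is a uniqueness set for $\mm$: an $F\in\mm$ vanishing at all $\mu_k$ lies in $\mm_0=\{0\}$.

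It remains to check the non-uniqueness of $\{\mu_1,\dots,\mu_{M-1}\}$. At the final stage we produced $\mm_1=\{F\in\mm:\,F(\mu_2)=\dots=F(\mu_M)=0\}$ with $\dim\mm_1=1$; any nonzero $G\in\mm_1$ vanishes at $\mu_2,\dots,\mu_M$ but, by the choice of $\mu_1$, does not vanish at $\mu_1$. I would instead order the labels so that the relabeled set yields exactly a nonzero function vanishing on the first $M-1$ points: concretely, one rearranges so that $\mu_M$ plays the role excluded last, giving a nonzero $G\in\mm$ with $G(\mu_1)=\dots=G(\mu_{M-1})=0$ and $G(\mu_M)\neq 0$. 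Thus $\{\mu_1,\dots,\mu_{M-1}\}$ is a non-uniqueness set, as required.

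The only point requiring care — and the main (though mild) obstacle — is ensuring at each stage that the evaluation functional $F\mapsto F(\mu_k)$ is not identically zero on the current subspace $\mm_{k}$; this is exactly where I use that $\mm_k$ consists of entire functions and is nontrivial, so its nonzero elements have isolated zeros and cannot all vanish at a common point of $\co$. The division property of $\ff$ is not strictly needed for this lemma itself, but it reassures us that the subspaces and their evaluation functionals behave as expected within $\ff$; the essential input is merely that a nonzero finite-dimensional space of entire functions admits a point where it does not vanish.
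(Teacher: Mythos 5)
Your proposal is correct and is essentially the same argument as the paper's: both peel off one dimension at a time by choosing a point where the current subspace does not vanish identically and passing to the kernel of that evaluation functional, using only the fact that a nonzero finite-dimensional space of entire functions has a point of non-vanishing. The paper phrases this as an induction on $M$ (which makes the labeling come out automatically), while you unroll it iteratively from $\mu_M$ down to $\mu_1$ and fix the labels by a final relabeling---a purely cosmetic difference.
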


\begin{proof}
We argue by induction on $M$. The base $M=1$ (i.e. $\mm = {\rm Lin}\,\{F_1\}$) 
is trivial since one can take as $\mu_1$ any point such that $F_1(\mu_1) \ne 0$. Assume that we can prove the statement
for $M-1$. Take any point $\mu_1 \in \co$ such that there is a function $F_0\in\mm$ with $F_0(\mu_1) \ne 0$
and consider $\mm_1 = \{F\in \mm: F(\mu_1) = 0\}$. It is obvious that ${\rm dim}\, \mm =M-1$. 
By the induction hypothesis there exist points $\mu_2, \dots, \mu_M$ which form a uniqueness set for $\mm_1$,
while $\{\mu_2, \dots, \mu_{M-1}\}$ is a non-uniqueness set for $\mm_1$. 
Then the points $\{\mu_k\}_{1\le k\le M}$ have the required property. 
\end{proof}

Let $\{\mu_k\}_{1\le k\le M}$ be the set constructed in the
lemma. Since $\mathcal{M}$ is finite-dimensional, it follows that  
$$
\sum_{k =1}^M (1+|\mu_k|^2) e^{-2\phi(\mu_k)} |F_+(\mu_k)|^2 \asymp
\|F_+\|^2_{\ff}, \qquad F_+\in \mm \, .
$$
Let  $\tilde W = \{w_m\}_{m\ge 1} \cup \{\mu_k\}_{1\le k\le M}$ the
enlarged set.
By construction, the sampling operator $F_+ \mapsto
\big( F_+(\tilde{w_k})\big)_{w_k\in \tilde{W}}$ is one-to-one and it has
closed range of finite codimension in $\ell^2(\tilde{W})$. Consequently,
by \cite[Thm. 8.29, (c)]{heil}, $\tilde{W}$ is a sampling set for $\ff$.

 By Lemma~\ref{lemadd} the set $\tilde{W} \setminus \{\mu_M\}$ is a
non-uniqueness set 
for $\ff$ and 
% , thus, is not sampling. By Proposition~\ref{exact} 
% $\tilde{W}$ is a complete interpolating sequence for $\ff$. }
% \medskip
% %Next we show that $\tilde{W}$ fails to be sampling after the removal
% %of an arbitrary point from $\tilde{W}$. By  this is clear
% %for the point $\mu _M$, %
therefore  there exists a function $H\in \ff $, such that $H(w_m)=
H(\mu _k)= 0$ for $m\in \N$ and $k=1, \dots , M-1$, but $H(\mu _M)
\neq 0$. We now use the division property of $\ff $ and show that
every set $\tilde{W} \setminus \{v\}$, where  $v \in \tilde{W}
\setminus  \{\mu _M\}$, is also a non-uniqueness set in $\ff$.  Consider $\tilde{H}(w) = \frac{w-\mu _M}{w-v} H(w)$. Then $\tilde{H} $ is  non-zero in $\ff $,  and $\tilde{H}$ vanishes
on $\tilde{W} \setminus \{v\}$, but  $\tilde{H}$ cannot vanish
at $v$, because $\tilde{W}$ is sampling on $\ff _+$.  

It follows that the frame of reproducing kernels $\{k_w\}_{w\in \tilde W}$ 
fails to be complete after removal of {\it any} of its members. 
We conclude that the normalized reproducing kernels corresponding to the sampling set 
$\tilde W$ form an {\it exact frame}, i.e., a frame which fails to remain a frame after removal of any of its vectors. 
By well-known results (see, e.g.,  \cite{heil,young}) the normalized reproducing kernels at $\tilde W$
form a Riesz basis in $\ff$, and, equivalently, $\tilde W$ is a complete  interpolating sequence for $\ff$.

\medskip
{\bf Case 2.} Assume now that ${\rm Ker}\,  U_+ = 0$, whence $U_+$ is an isomorphism onto its image
and so $\{w_m\}_{m\ge 1}$ is already sampling for $\ff$. Let ${\rm
  codim}\,{\rm Range}\, U_+ =M$. This means that the frame of
reproducing kernels $\{k_{w_m}\}_{m\ge 1}$ has finite excess, and therefore it becomes a
Riesz basis after the removal of finitely many elements,
see~\cite[Thm.~2.4]{Hol94} and \cite{heil}.

% We claim that the set 
% $\{w_m\}_{m\ge M+2}$ is a non-uniqueness set in $\ff$. 

% So assume that the orthogonal complement to 
% ${\rm Range}\, U_+$ in $\ell^2(\N)$ is spanned by the sequences $c_1, \dots, c_M$. 
% Then one can always find a nontrivial sequence $c = \{c^j\}_{j\ge 1}$ in $\ell^2(\N)$ 
% such that $c$ is orthogonal to $c_1, \dots, c_M$ and $c^j = 0$, $j\ge M+2$. It follows that $c$ 
% is in the range of $U_+$, i.e., $c=U_+ f_+$ for some $f_+\in V^2_+$. Thus, $f_+(\lambda_m) = 0$, $m\ge M+2$,
% and so the corresponding $F_+\in\ff$ vanishes on $\{w_m\}_{m\ge M+2}$. 

% {\color{blue}
% By Remark~\ref{exact1}, there exists $m_0\le M+1$ such that $\{w_m\}_{m\ge m_0}$ is a complete 
% interpolating sequence for $\ff$. } 
% \medskip

Repeating the argument for the negative part of $V^2_\cv$, we conclude that 
the set $\{w_m^{-1}\}_{m\le - 1}$ either can be enlarged  to a
complete interpolating set for $\ff$ or 
becomes a complete interpolating set after the removal of a finite set of points. 

\medskip

 \textbf{Step 2.}  It remains to show that after an appropriate enumeration the sequence $\{\lambda_m\}$ will satisfy conditions (a) and (b) 
of Theorem~\ref{main1}. We  note that both these conditions and also the property of being a complete interpolating sequence
for $\ff$ are stable with respect to moving a finite number of points
(without gluing them).
Formally, we remove a finite subset $F_1\subseteq \Lambda $ and add
a finite set $F_2$, such that $\mathrm{card}\, F_1 = \mathrm{card}\, F_2$ 
and $F_2 \cap \Lambda = \emptyset $. After
relabeling,  the new
set  $\Lambda ' = (\Lambda \setminus F_1) \cup F_2$ satisfies 
conditions (a) and (b), if and only if $\Lambda $ satisfies 
conditions (a) and (b).

In Step~1 we have shown that either  (i) there exists a finite set
$F_+ =  \{ \mu_j\} _{m=1, \dots , M}\}\subset \R$ % of cardinality $m_0$
such that  $\tilde{W}_+ = \{w_m\}_{m\ge 1} \cup \{e^{2c \mu
  _j}\}_{m=1, \dots , M}$ is a complete interpolating set
for $\ff$ or (ii)  we can remove a finite set $J_+ \subseteq
\{w_m\}_{m\ge 1}$, such that $\tilde{W}_+ = \{w_m\}_{m\ge 1} \setminus
J_+$ is a complete interpolating set for $\ff $. Using  the original
points $\lambda _m $ instead of $e^{2c\lambda _m}$ and calling the
removed points also $F_+$,  the  relabeled
sequence $\{\lambda _m\}_{m\geq 0} \cup F_+$ or $\{\lambda _m\}_{m\geq
  0} \setminus F_+$  then satisfies a one-sided version of (a) and (b) by
Theorem~\ref{main2}.  

% Similarly, if $\{w_m\}_{m\ge 1}$ becomes a complete interpolating set
% after removal of $m_0$ of its points (without loss of generality, the points $\lambda_m$, $m\le m_0$), 
% We add the set $F_+ = \{\mu_j\}$ to $\{\lambda_m\}_{m\ge 1}$
% and then relabel this set as $\{\lambda'_m\}_{m\ge 1}$ with $\lambda_m' = m +\delta'_m$. 
% Then the  relabeled sequence then satisfies a one-sided version of (a) and (b).
% we again can relabel the sequence $\{w_m\}_{m\ge m_0+1}$ to satisfy (a) and (b).  }

We apply the same procedure to the sequence $\{w_m^{-1}\}_{m<0}$ and
after adding  a finite set $F_- = \{ \nu _j\}$, we obtain a complete interpolating
sequence $\tilde{W}_- =  \{w_m^{-1}\}_{m<0} \cup \{e^{-2c\nu _j}\}$ or $\tilde{W}_-
=  \{w_m^{-1}\}_{m<0} \setminus J_-$.   Again a relabeling the
sequence $\{\lambda _m\}_{m< 0} \cup F_-$ or $\{\lambda _m\}_{m<
  0} \setminus F_-$ satisfies conditions (a) and (b). 

By taking the union of both sequences and returning to $\lambda _m,
\mu _j, \nu _j$ (instead of $w_m$),   we see  that there exist 
finite sets $F_1$ and $F_2$, satisfying $F_1 \subseteq \Lambda $ and
$F_2 \cap \Lambda = \emptyset $,   such that % either $\{\lambda_m\}_{m\in \Z} \cup J$ 
% or $\{\lambda_m\}_{m\in \Z} \setminus J$
$\Lambda ' =  (\Lambda \setminus F_1) \cup F_2$ can be enumerated to satisfy
(a) and (b). It is possible that $F_1=\emptyset $ (we  add points to both
$\{w_m\}_{m\geq 0}$ and to $\{w_m\}_{m<0}$) or $F_2 = \emptyset$ (we remove
points from both sets).

Hence, by sufficiency part 
of Theorem~\ref{main1}, $\Lambda '$ is a complete interpolating sequence for $V^2_\cv$. % either $\{\lambda_m\}_{m\in \Z} \cup J$ or $\{\lambda_m\}_{m\in \Z} \setminus J$ 
% is a complete interpolating sequence for $V^2_\cv$.
Note that $\Lambda = (\Lambda ' \setminus F_2) \cup F_1$. Let 
$\mathrm{card}\, F_1 = \mathrm{card}\, F_2 + \ell $. If $\ell >0$, we
have moved $\mathrm{card}\, F_2$ points  and added  $\ell $ additional
points. Then $\Lambda $ fails to be a complete interpolating sequence
for $V^2_{\cv }$ in contradiction to the assumption. If $\ell < 0$, we
have moved $\mathrm{card}\, F_1$ points and removed $\ell$ additional  points. Again $\Lambda $ fails
to be a complete interpolating sequence.

We conclude that $\mathrm{card}\, F_1 = \mathrm{card}\, F_2$ and
$\Lambda $ is obtained from $\Lambda '$ by moving finitely many
points. As observed above, since $\Lambda '$ satisfies conditions (a)
and (b), so must the original sequence $\Lambda $, as was to be
proved.  \qed

% However, by the hypothesis, $\{\lambda_m\}_{m\in \Z}$
% is itself a complete interpolating sequence for $V^2_\cv$. Hence, it fails
% to be complete interpolating after removing  any of its points
% or adding  new points.
% We conclude that the set $J$ is empty, and the
% original set $\Lambda $ satisfies conditions (a) and (b). 
%\qed
\medskip

% \begin{remark}
% {\rm In Step~2 one could argue alternatively that the frame
% $\{k_{w_n}\}_{n\geq 1}$ has finite excess, and therefore it becomes a
% Riesz basis after the removal of finitely many elements,
% see~\cite[Thm.~2.4]{Hol94} and \cite{heil}. }
% \end{remark}
\medskip

\section{Proof of the Density Results and Remarks}
\label{section5} 

Finally we indicate  how Theorems~\ref{thm2} and~\ref{thm-int} follow from the characterization
of complete interpolating sequences in $V^2_\cv$. We  use  a simple
``combinatorial'' argument from \cite{seip} in order to show that any
separated sequence $\Lambda$ with  
$D^-(\Lambda) >1$ contains a subsequence $\Lambda' =
\{m+\delta_m\}_{m\in\Z}$ with $\delta_m$ satisfying Avdonin-type
conditions (a) and (b) of Theorem~\ref{main1}. Analogously, any 
separated $\Lambda$ with $D^+(\Lambda) <1$ can be enhanced  to such a
set  $\Lambda'$. We refer to  \cite{bdhk} for the precise details, as
they are almost identical.  

The necessary density conditions for sampling and interpolation in a
shift-invariant space  are well-known, see, e.g.,
~\cite{AG01}. Let us show that they can also be deduced from the results for $\ff $ in
~\cite{bdhk}. 

  Let $\Lambda$ be an interpolating set for $V^2_\cv$. Then the mapping
$f\mapsto \{f(\lambda_m)\}_{m\ge 1}$ acts from $V^2_\cv$ onto $\ell^2(\mathbb{N})$.
By Claim 2 of the proof of sufficiency part in Theorem~\ref{main1} the map 
$f\mapsto  \{f_-(\lambda_m)\}_{m\ge 1}$ is a compact operator 
from $V^2_\cv$ to $\ell^2(\mathbb{N})$, whence the operator $f\mapsto  \{f_+(\lambda_m)\}_{m\ge 1}$
has the closed range of finite codimension. Then it is easy to see that for some $m_0 \ge 1$
the map $f\mapsto  \{f_+(\lambda_m)\}_{m\ge m_0}$ will be onto. Hence, the set 
$\{\lambda_m\}_{m\ge m_0}$ will be an interpolating set for $V^2_+$ and so $\{w_m\}_{m\ge m_0}$
will be interpolating for $\ff$. We conclude that  $D^+(\Lambda) \le 1$. 

Now assume that $\Lambda$ is a sampling set for $V^2_\cv$ and, in particular, for $V^2_+$. 
Since $\Lambda$ is a finite union of separated sets, the sequence $\{f_+(\lambda_m)\}_{m\le -1}$ decays very fast
and it is easy to show that the set $\{\lambda_m\}_{m\ge m_0}$ will be sampling for $V^2_+$.  
\medskip

\subsection*{Further remarks.} 

1. Let $V^p_\cv = \{f(z) = \sum_{n\in \Z} c_ne^{-\cv (z-n)^2} : \ (c_n)
\in\ell^p(\Z) \}$ be the subspace generated
by shifts of the Gaussian with $\ell ^p$ coefficients. This is a
closed subspace of $ L^p(\R )$. Again,  a sequence  $\Lambda
= \{\lambda _n: n\in \Z\}$ is called a sampling set  for $V_\cv ^p$, if 
$$
\sum_{n\in \Z} |f(\lambda_n)|^p \asymp \|f\|^p_{V^p_\cv } \asymp
\|c\|_p^p , \qquad f\in
V_\cv ^p, 
$$ 
and  {\it interpolating for $V^p_\cv $} if  any $(a_n) \in \ell^p(\Z)$  can
be interpolated by a function  $f\in V_\cv ^p$. Based on the general theory
of sampling in shift-invariant spaces with a nice generator, we can
assert that \emph{a set $\Lambda \subseteq \R $ is sampling for $V^2_\cv$,
  if and only if $\Lambda $ is sampling for some $V^{p_0}_\cv$,   if and
  only if $\Lambda $ is sampling for  $V^{p}_\cv $ for all $p, 1\leq p\leq
  \infty$}~\cite[Thm.~3.1]{grs1}. 
A similar statement holds for the interpolation
property. Theorem~\ref{main1} therefore provides a characterization
for complete interpolating sequences for all spaces $V^p_\cv $. 
\medskip

2. \emph{Sign retrieval.} The sign retrieval problem asks whether a
real-valued function $f$ in some function space is uniquely determined
by its unsigned values $|f(\lambda )|$ on some set $\Lambda $. For
$V^2_a$ with a real-valued Gaussian generator,  Theorem~\ref{main1} implies the following result on sign
retrieval. 

\begin{corollary} 
  Let $\Lambda = \{n/2 + \delta _n: n\in \Z \}$ be
  uniformly separated  such that $\{\delta _n\} $ satisfies the Avdonin-type
  conditions $(a)$ and $(b)$ with $\delta < 1/4$. Then every $f\in V^2_a
  $ is uniquely determined by its unsigned values $\{|f(n/2 + \delta _n)|: n\in
  \Z \}$.
\end{corollary}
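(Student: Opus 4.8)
The plan is to run the standard ``difference of squares'' reduction and then observe that the square of a function in $V^2_a$ lands, after a dilation by the factor $2$, in a Gaussian shift-invariant space to which Theorem~\ref{main1} applies directly. Since the unsigned data $\{|f(\lambda_n)|\}_{n\in\Z}$ cannot distinguish $f$ from $-f$, the assertion is understood up to a global sign. Concretely, I would take two real-valued functions $f,h\in V^2_a$ (that is, with real coefficients) satisfying $|f(\lambda_n)|=|h(\lambda_n)|$ for all $n$, and aim to prove that $f=h$ or $f=-h$.

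First I would record the key identity. Since $f$ and $h$ are real on $\R$, the hypothesis gives $f(\lambda_n)^2=h(\lambda_n)^2$ for every $n$, so the entire function $F:=f^2-h^2$ vanishes on all of $\Lambda$. Next I would locate the space containing $F$. Using the elementary identity $(z-n)^2+(z-m)^2=2\big(z-\tfrac{n+m}{2}\big)^2+\tfrac{(n-m)^2}{2}$, a direct computation shows that for $f=\sum_n c_n e^{-a(z-n)^2}$ one has
$$
f(z)^2=\sum_{k\in\Z} d_k\, e^{-2a(z-k/2)^2},\qquad d_k=\sum_{n+m=k}c_nc_m\,e^{-a(n-m)^2/2}.
$$
The Gaussian decay of the weights $e^{-a(n-m)^2/2}$ makes $(d_k)\in\ell^2(\Z)$, so $f^2$, and likewise $h^2$ and $F$, belong to the Gaussian shift-invariant space with generator $e^{-2az^2}$ and half-integer lattice $\tfrac12\Z$.

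Then I would dilate. Under $u=2z$ this space is carried onto $V^2_{a/2}$ (generator $e^{-(a/2)u^2}$, integer shifts, unchanged coefficients $d_k$), and the sampling points become $u_n=2\lambda_n=n+2\delta_n$. Uniform separation of $\Lambda$ gives uniform separation of $\{u_n\}$; the perturbations $2\delta_n$ lie in $\ell^\infty$, and condition (b) with $\delta<\tfrac14$ yields $\sup_n\tfrac1N\big|\sum_{k=n+1}^{n+N}2\delta_k\big|\le 2\delta<\tfrac12$. Hence $\{u_n\}$ satisfies conditions (a) and (b) of Theorem~\ref{main1} for $V^2_{a/2}$ and is, in particular, a uniqueness set for $V^2_{a/2}$. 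The dilated function $u\mapsto F(u/2)$ lies in $V^2_{a/2}$ and vanishes on $\{u_n\}$, so $F\equiv 0$, i.e. $f^2\equiv h^2$. Because the entire functions form an integral domain, the factorization $(f-h)(f+h)\equiv 0$ forces $f-h\equiv 0$ or $f+h\equiv 0$, which is the desired conclusion $f=\pm h$.

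The only step that needs genuine care is the bookkeeping of the dilation: checking that $f^2$ really has $\ell^2$ coefficients over the half-integer lattice, and verifying that passing to $u=2z$ converts the threshold $\delta<\tfrac14$ for $\Lambda$ into the threshold $2\delta<\tfrac12$ demanded by Theorem~\ref{main1}. This factor-of-two rescaling is precisely what produces the constant $1/4$ in the statement, mirroring the $1/4$ of the Kadets-type Corollary~\ref{corkad}.
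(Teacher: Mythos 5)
Your proof is correct and takes essentially the same route as the paper: the paper's proof is a one-line reference to \cite{Gr20a}, stating that one replaces the sampling sets of density $D^-(\Lambda)>2$ used there by a complete interpolating sequence for a dilated version of $V^2_a$, which is precisely your difference-of-squares argument $f^2-h^2=(f-h)(f+h)$ combined with the dilation bookkeeping and Theorem~\ref{main1}. Your write-up merely supplies the details the paper delegates to the citation --- the half-lattice expansion of $f^2$ with generator $e^{-2az^2}$, the $\ell^2$ bound on its coefficients, and the conversion of the threshold $\delta<\tfrac14$ into $2\delta<\tfrac12$ under $u=2z$ --- all of which check out.
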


The proof is the same as  in~\cite{Gr20a}, we simply
replace sampling sets of density $D^-(\Lambda ) >2$ by   a
complete interpolating sequence for a dilated version of $V^2_a$. Thus
sign retrieval is possible even at the critical density. 
\medskip

3. It would be interesting to obtain similar results for complete
interpolating sequences in shift-invariant spaces with other
generators. Even a sharp version of Corollary~\ref{corkad} would have
important consequences  for Gabor frames.

\end{document}